\begin{document}

\title[Computing Homology Invariants of Legendrian Knots]{Computing Homology Invariants of Legendrian Knots}

\author{Emily E. Casey}
\address{Siena College, Loudonville, NY 12211}

\author{Michael B. Henry}
\address{Siena College, Loudonville, NY 12211}
\email{mbhenry@siena.edu}

\begin{abstract}

The Chekanov-Eliashberg differential graded algebra of a Legendrian knot $L$ is a rich source of Legendrian knot invariants, as is the theory of generating families. The set $P(L)$ of homology groups of augmentations of the Chekanov-Eliashberg algebra is an invariant, as is a count of objects from the theory of generating families called graded normal rulings. This article gives two results demonstrating the usefulness of computing the homology group of an augmentation using a combinatorial interpretation of a generating family called a Morse complex sequence \cite{Henry2013}. First, we show that if the projection of $L$ to the $xz$-plane has exactly 4 cusps, then $|P(L)| \leq 1$. Second, we show that two augmentations associated to the same graded normal ruling by the many-to-one map between augmentations and graded normal rulings defined by Ng and Sabloff \cite{Ng2006} need not have isomorphic homology groups.
\end{abstract}

\maketitle

\section{Introduction}
\mylabel{ch:intro}

The classification of Legendrian knots in the standard contact structure on $\R^3$ has been significantly advanced by invariants derived from the Floer-theoretic techniques of symplectic field theory \cite{Eliashberg, Eliashberg2000} and the classical Morse-theory of generating families \cite{Chekanov2005,Jordan2006,Traynor2001}. Although these two approaches to Legendrian knot theory have different geometric foundations, many connections have been found between the invariants they define. In this article, we give two results that demonstrate the usefulness of a recently defined object, called a Morse complex sequence, in deepening understanding of these connections. 

Chekanov \cite{Chekanov2002a} and, independently, Eliashberg \cite{Eliashberg2000} assign a differential graded algebra to a Legendrian knot $\Leg$ that is, in the case of \cite{Eliashberg2000}, a special case of symplectic field theory. The Chekanov-Eliashberg algebra is a Legendrian invariant, up to an appropriate algebraic equivalence. Ng \cite{Ng2003} gives a description of the Chekanov-Eliashberg algebra of $\Leg$ in terms of the $xz$-projection $\front$, called the front diagram of $\Leg$. Though easy to define, the Chekanov-Eliashberg algebra is difficult to employ as an invariant. However, more manageable Legendrian invariants are defined from certain maps from the Chekanov-Eliashberg algebra to $\Z / 2 \Z$ called augmentations. The set $Aug(\front)$ consists of augmentations of the Chekanov-Eliashberg algebra defined on $\front$. Each augmentation $\aug$ determines a $\Z / 2 \Z$ chain complex and the Poincar\'{e} polynomial of the resulting homology group is called the Chekanov polynomial of $\aug$ and written $P_{\aug}(t)$. Much is known about augmentations and their Chekanov polynomials; for example, Chekanov \cite{Chekanov2002} proves $\{P_{\aug}(t)\}_{\aug\in Aug(\front)}$ is a Legendrian invariant, Sabloff \cite{Sabloff2006} proves that the coefficients of $P_{\aug}(t)$ satisfy a duality relationship, and Melvin and Shrestha \cite{Melvin2005} prove that, for any natural number $n$, there exists a Legendrian knot with $n$ Chekanov polynomials. Theorem~\ref{t:two-cusp} proves that the front diagram of a Legendrian knot with more than one Chekanov polynomial has more than four cusps. 

\begin{reptheorem}{t:two-cusp}
If the front diagram of a Legendrian knot has exactly four cusps, then the Legendrian knot has at most one Chekanov polynomial. 
\end{reptheorem}

A graded normal ruling on $\front$ is a bijection between the left and right cusps of $\front$ along with, for each pair of identified cusps, two paths between those cusps that satisfy certain requirements; see Figure~\ref{f:ruling-pieces} for a selection of those requirements and Figure~\ref{f:ruling} for an example of a graded normal ruling. Chekanov and Pushkar \cite{Chekanov2005} show that a generating family for $\Leg$ defines a graded normal ruling on $\front$ and, in a similar spirit, the second author \cite{Henry2011} shows that a Morse complex sequence defines a graded normal ruling on $\front$. The set of all graded normal rulings on $\front$ is $\mathcal{R}^0(\front)$. Fuchs \cite{Fuchs2003} proves that if $\mathcal{R}^0(\front)$ is non-empty, then $Aug(\front)$ is non-empty as well. Fuchs and Ishkanov \cite{Fuchs2004} and, independently, Sabloff \cite{Sabloff2005} prove the converse. Ng and Sabloff \cite{Ng2006} further clarify the relationship between augmentations and graded normal rulings by proving that there exists an algorithmically defined many-to-one map $\Psi: Aug(\front) \to \mathcal{R}^0(\front)$. Josh Sabloff posed the following question to the second author, ``Does $\Psi(\aug_1) = \Psi(\aug_2)$ imply $P_{\aug_1}(t) = P_{\aug_2}(t)$?'' In other words, are the Chekanov polynomials determined by graded normal rulings? Theorem~\ref{t:infinite-family} answers the question in the negative. 

\begin{reptheorem}{t:infinite-family}
For any natural number $m \geq 2$, there exists a front diagram $\front_m$ with graded normal ruling $\ruling$ and augmentations $\aug_1, \hdots, \aug_m$ so that:
\begin{enumerate}
	\item For all $1 \leq i \leq m$, $\Psi ( \aug_i) = \ruling$; 
	\item If $i \neq j$, then $P_{\aug_i}(t) \neq P_{\aug_j}(t)$; and
	\item The smooth knot type of $\front_m$ is prime.
\end{enumerate}
\end{reptheorem}

Recently, the idea of a Morse complex sequence, originally introduced by Petya Pushkar and first appearing in print in \cite{Henry2011}, has proven to be useful in further refining connections between the Chekanov-Eliashberg algebra and invariants derived from generating families; see \cite{Henry2011, Henry2013, Henry2014}. Informally, a Morse complex sequence, abbreviated MCS and denoted $\MCS$, of $\front$ is a combinatorial/algebraic analogue of a generating family. The set $MCS(\front)$ consists of all MCSs of $\front$. In \cite{Henry2013}, an MCS $\MCS$ is assigned a differential graded algebra that, conjecturally, extends a homological Legendrian invariant derived from generating families to an algebra Legendrian invariant. The homology of the linear level of the MCS algebra of $\MCS$ defines a Poincar\'{e} polynomial called the MCS polynomial of $\MCS$ and written $P_{\MCS}(t)$. By Corollary 7.12 of \cite{Henry2013}, $\{P_{\aug}(t)\}_{\aug \in Aug(\front)} = \{P_{\MCS}(t)\}_{\MCS \in MCS(\front)}$ holds. Therefore, questions concerning the Chekanov polynomials of a Legendrian knot may be framed in terms of MCS polynomials. Beyond the statements of the two main results, this article is meant to demonstrate the usefulness of such a translation, as the proofs of both Theorem~\ref{t:two-cusp} and \ref{t:infinite-family} take this approach. 

\subsection{Outline of the article}

Section~\ref{ch:background} provides the background material in Legendrian knot theory necessary to prove the main results in Section~\ref{ch:results}. Section~\ref{ss:rulings} includes two technical results concerning 2-graded normal rulings, Propositions~\ref{prop:fmax} and \ref{prop:2bridgefmax}, used to prove the smooth knots in Theorem~\ref{t:infinite-family} are prime. Section~\ref{ss:MCSs} gives carefully chosen background material on MCSs and the MCS algebra so as to include what is necessary for Section~\ref{ch:results}, but avoid most of the technical details of MCSs. 

\subsection{Acknowledgments}
The authors would like to thank Josh Sabloff and Dan Rutherford for many fruitful discussions. In particular, Dan suggested the approach to proving primality used in Theorem~\ref{t:infinite-family}. This work was supported by the Siena College Summer Scholars undergraduate research program while the first author was a student at Siena College.
\section{Background}
\mylabel{ch:background}

The \textbf{standard contact structure} on $\R^3$ is the $2$-plane distribution $\xi_{std}$ defined by the $1$-form $dz - y dx$. A smooth knot $\Leg : S^1 \to \R^3$ is \textbf{Legendrian} if $\Leg'(t) \in \xi_{std}$ for all $t \in S^1$. Two Legendrian knots $\Leg_0$ and $\Leg_1$ are \textbf{equivalent} if there exists a smooth map $\Leg: S^1 \times [0,1] \to \R^3$, called a \textbf{Legendrian isotopy}, so that $L_0 = L(\cdot,0)$, $L_1=L(\cdot,1)$, and $\Leg(\cdot,t)$ is a Legendrian knot for all $t \in S^1$. The projection of $\Leg$ to the $xz$-plane is the \textbf{front diagram} $\front$ of $\Leg$. Every Legendrian knot is equivalent to a Legendrian knot whose front diagram has transverse double points, called \textbf{crossings}, and semicubical cusps; see Figure~\ref{f:nearly-plat}. A \textbf{strand} of $\front$ is a smooth path in $\front$ with one endpoint at a left cusp and the other at a right cups. A front diagram is \textbf{plat} if all left cusps have the same $x$-coordinate, all right cusps have the same $x$-coordinate, and no two crossings have the same $x$-coordinate. A front diagram is \textbf{nearly plat} if all cusps and crossings have distinct $x$-coordinates and it is equivalent to a plat front diagram by an arbitrarily small Legendrian isotopy; see Figure~\ref{f:nearly-plat}. All Legendrian knots considered in this article have nearly plat front diagrams.

\begin{figure}[t]
\labellist
\small\hair 2pt
\pinlabel {$i$} [tl] at 305 103
\pinlabel {$i+1$} [br] at 315 118
\pinlabel {$i$} [tl] at 410 103
\pinlabel {$i+1$} [bl] at 406 118
\pinlabel {(a)} [tl] at 351 93
\pinlabel {(b)} [tl] at 351 46
\pinlabel {(c)} [tl] at 351 0
\endlabellist
\centering
\includegraphics[scale=.9]{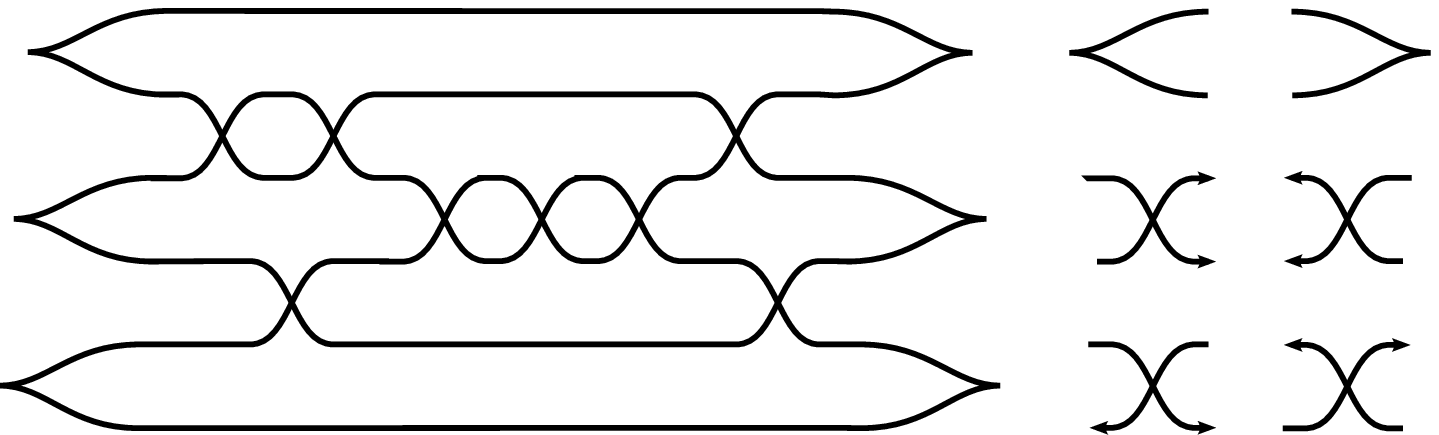}
\caption{(Left) A nearly plat front diagram for a Legendrian knot. (Right) (a) A Maslov potential near cusps of $\front$. (b) Positive crossings. (c) Negative crossings. }
\label{f:nearly-plat}
\end{figure}

We define the two ``classical'' Legendrian knot invariants in terms of the front diagram. Given an oriented Legendrian knot $\Leg$, the \textbf{rotation number} $r(\Leg)$ is $(d-u)/2$ where $d$ (resp. $u$) is the number of cusps in $\front$ at which the orientation travels downward (resp. upward). All Legendrian knots considered in this article have rotation number 0. A crossing of the front diagram $\front$ is \textbf{positive} (resp. \textbf{negative}) if the two crossing strands are both oriented to the left or both oriented to the right (resp. both oriented down or both oriented up); see Figure~\ref{f:nearly-plat} (b) and (c). The \textbf{writhe} $w(\front)$ of $\front$ is the number of positive crossings minus the number of negative crossings. The \textbf{Thurston-Bennequin number} $tb(\Leg)$ is $w(\front)$ minus half the number of cusps. Given a smooth knot $K$, $\overline{tb}(K)$ is the maximum Thurston-Bennequin number over all Legendrian knots smoothly isotopic to $K$.

\subsection{Chekanov Polynomials}
\label{s:LCH}

Suppose $\Leg$ is a Legendrian knot whose front diagram $\front$ has crossings and cusps with distinct $x$-coordinates. We fix a map $\mu: \Z / (2 r(\Leg) \Z) \to \Leg$, called a \textbf{Maslov potential}, that is constant except at cusp points of $\front$, where it changes as in Figure~\ref{f:nearly-plat}~(a). Assign the labels $Q=\{q_1, \hdots, q_n\}$ to the crossings and right cusps of $\front$ from left to right. We define a grading $| \cdot |: Q \to \Z / (2 r(\Leg) \Z)$ by $|q_i| = 1$ if $q_i$ is a right cusp and, otherwise, $|q_i| = \mu(T) - \mu(B)$, where $T$ and $B$ are the strands of $\front$ crossing at $q_i$ and $T$ has smaller slope. Since $\Leg$ has one component, the grading does not depend on the chosen Maslov potential. Let $A(\front)$ be the $\Z / 2 \Z$ vector space freely generated by $Q$ and let $\alg(\front)$ be the unital tensor algebra $TA(L)$ graded by $|\cdot|$. The \textbf{Chekanov-Eliashberg differential graded algebra} of $\front$ is the pair $(\A(\front), \df)$, where $\df: \A(\front) \to \A(\front)$ is a certain degree $-1$ differential. The original formulation of the Chekanov-Eliashberg algebra, in terms of the projection of $\Leg$ to the $xy$-plane, appears in \cite{Chekanov2002a}; \cite{Ng2003} provides a description in terms of the front diagram. We will not define the map $\df$ as there is no need to work with it in this article. 

The Chekanov-Eliashberg algebra is a Legendrian isotopy invariant, up to an algebraic equivalence called stable tame isomorphism. The idea of an augmentation, also first formulated in \cite{Chekanov2002a} in the context of Legendrian knots, provides a method for extracting more easily computed Legendrian isotopy invariants from the Chekanov-Eliashberg algebra. An \textbf{augmentation} is an algebra map $\aug : (\A(\front), \df) \to \Z / 2 \Z$ satisfying $\aug(1)=1$, $\aug \circ \df = 0$, and $\aug(q) =1$ only if $|q| = 0$. We say a crossing $q$ is \textbf{augmented} by $\aug$ if $\aug(q) = 1$. The set $Aug(\front)$ is the set of all augmentations of $(\A(\front), \df)$. 

Given $\aug \in Aug(\front)$, we define $\df^{\aug}$ to be the differential $\phi^{\aug} \circ \df \circ (\phi^{\aug})^{-1}$, where $\phi^{\aug} : \A(\front) \to \A(\front)$ is the algebra homomorphism defined on generators by $\phi^{\aug}(q) = q + \aug(q)$. This differential has the property that $\df^{\aug}_{1} \circ \df^{\aug}_{1} = 0$, where $\df^{\aug}_1(q)$ are the length 1 monomials of $\df^{\aug}$. Thus, $(A(\front), \df^{\aug}_1)$ is a finite-dimensional chain complex called a \textbf{linearization} of the Chekanov-Eliashberg algebra. We let $h^{\aug}_i$ be the dimension in degree $i$ of the homology of $(A(\front), \df^{\aug}_1)$ and define the \textbf{Chekanov polynomial} $P_{\aug}(t)$ to be $$P_{\aug}(t) = \sum_{i \in \Z} h_i^{\aug} t^i.$$ The collection $\{ P_{\aug}(t) \}_{\aug \in Aug(\front)}$ is a Legendrian isotopy invariant \cite{Chekanov2002}. The coefficients of a Chekanov polynomial satisfy a duality relationship.

\begin{theorem}[\cite{Sabloff2006}]
\label{t:duality}
Given $\aug \in Aug(\front)$, 
$$
h^{\aug}_i = \left\{ \begin{array}{rl}
 h^{\aug}_{-i} &\mbox{ if $i \neq 1$} \\
 h^{\aug}_{-1} + 1 &\mbox{ if $i = 1$.}
       \end{array} \right.
$$
\end{theorem}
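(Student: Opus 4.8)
The plan is to deduce the relation from a \emph{duality long exact sequence}, in the spirit of a Poincar\'e--Lefschetz duality for the linearized complex. Since all the homology is taken with coefficients in the field $\Z / 2 \Z$, the dual cochain complex $(A(\front)^*, (\df_1^{\aug})^*)$ has homology whose degree-$i$ dimension again equals $h_i^{\aug}$. Thus the theorem is equivalent to the single generating-function identity $P_{\aug}(t) = P_{\aug}(t^{-1}) + t - t^{-1}$, i.e. to the statement that the only asymmetry between degree $i$ and degree $-i$ is a one-dimensional defect concentrated between degrees $1$ and $-1$. My goal is therefore to produce a map comparing $H_i^{\aug}$ with the degree $(-i)$ part of the dual homology and to control its kernel and cokernel.

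First I would construct a chain-level pairing on $A(\front)$ adapted to the front diagram. The generators are the crossings and the right cusps, the right cusps being exactly the degree-one generators, and reading the diagram from right to left exhibits a natural involution on the generating set that reverses gradings. Using this involution I would build a bilinear pairing under which $\df_1^{\aug}$ is adjoint to its own transpose up to the expected degree shift; this uses the augmentation condition $\aug \circ \df = 0$ in an essential way, since it is what makes the pairing descend to homology after linearizing. Passing to homology yields a map
$$\Phi_k : H_k^{\aug} \longrightarrow \left(H_{-k}^{\aug}\right)^{*}.$$
The map $\Phi_k$ will fail to be an isomorphism only because $\Leg$ is a closed curve, and I would encode this failure in a long exact sequence
$$\cdots \to H_k^{\aug} \xrightarrow{\Phi_k} \left(H_{-k}^{\aug}\right)^{*} \to W_k \to H_{k-1}^{\aug} \to \cdots,$$
whose correction terms $W_{*}$ assemble into the $\Z / 2 \Z$-homology of $\Leg \cong S^1$. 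In particular $\sum_k \dim W_k = 2$, with the two contributions placed in the degrees dictated by $H_0(S^1)$ and $H_1(S^1)$.

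Granting such a sequence, the theorem is pure bookkeeping. In every degree $k$ for which the adjacent correction terms vanish, $\Phi_k$ is an isomorphism and hence $h_k^{\aug} = h_{-k}^{\aug}$. The two surviving one-dimensional correction terms, coming from the fundamental class of the knot, then force exactly one off-diagonal relation, namely $h_1^{\aug} = h_{-1}^{\aug} + 1$; tracking the connecting homomorphisms shows that this is the \emph{only} place where symmetry is broken and that the Euler characteristic is preserved.

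The hard part will be the construction of the pairing together with the proof of exactness, and above all the identification of the correction terms $W_{*}$ with $H_{*}(S^1; \Z / 2 \Z)$. Concretely, this requires showing that the ``diagonal'' contributions of the pairing --- generators paired with their images under the right-to-left involution, which are governed by the behavior of $\df$ near cusps --- account for precisely one copy of the knot's fundamental class and nothing more. This is exactly the step where connectedness of $\Leg$ and the hypothesis $r(\Leg) = 0$ (so that the grading is genuinely by $\Z$) are used, and it is where the combinatorics of the front-diagram differential must be confronted in detail.
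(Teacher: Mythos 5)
The paper does not prove Theorem~\ref{t:duality} at all: it is quoted as an external result from \cite{Sabloff2006} and used as a black box (e.g.\ in Remark~\ref{r:MCS-simplification} and in the final Remark), so there is no internal proof to compare against. Measured against the actual proof in the cited reference, your high-level architecture is the right one: Sabloff does establish a long exact sequence relating linearized contact homology to its dual in which the correction terms are exactly $H_*(S^1;\Z/2\Z)$, and the $+1$ in degree $1$ does come from the fundamental class of the knot. So the ``bookkeeping'' paragraph at the end is sound \emph{granting} the exact sequence.

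The genuine gap is in how you propose to build the duality map, and it is a step that would fail as stated. There is no grading-reversing involution on the generating set obtained by ``reading the diagram from right to left'': the right cusps are all generators of degree $+1$ and in general have no degree $-1$ partners (the plat unknot has one generator, of degree $1$, and none of degree $-1$), and more generally $n_k \neq n_{-k}$ is common. Worse, if you did have a perfect chain-level pairing under which $\df_1^{\aug}$ is self-adjoint, the induced map $\Phi_k : H_k^{\aug} \to (H_{-k}^{\aug})^*$ would be an isomorphism in every degree, which contradicts the theorem you are trying to prove; the correction terms $W_*$ could never be nonzero. In the actual proof the duality map is \emph{not} induced by any pairing of generators: it is built from the quadratic part of the full (nonlinearized) differential, i.e.\ from counts of disks with two negative corners, linearized using the augmentation, and the identification of the mapping cone with $H_*(S^1;\Z/2\Z)$ is the substantive moduli-space argument. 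Since your proposal both misidentifies the source of the duality map and defers exactly the parts you correctly flag as hard (exactness, and the identification of $W_*$), it is a plausible research plan whose central construction needs to be replaced, not a proof.
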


\subsection{Normal Rulings}
\label{ss:rulings}

Suppose $\front$ is the front diagram of a Legendrian knot $\Leg$, the rotation number of $\Leg$ is 0, and the crossings and cusps of $\front$ have distinct $x$-coordinates. Fix a Maslov potential $\Maslov$ and assign a degree to each crossing as in Section~\ref{s:LCH}. 

\begin{figure}[t]
\centering
\includegraphics[scale=.9]{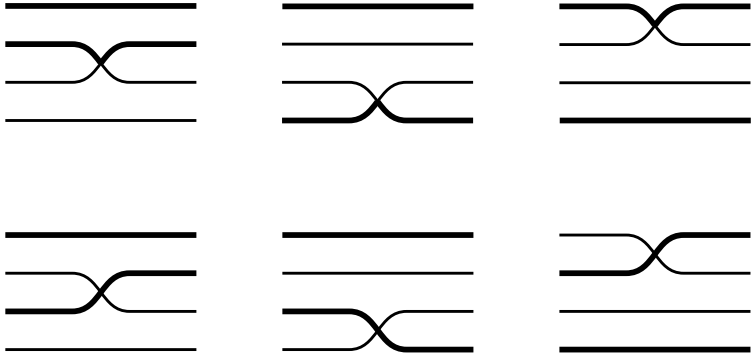}
\caption{The paths of a normal ruling near switches (top row) and returns (bottom row).}
\label{f:ruling-pieces}
\end{figure}

\begin{definition}
\label{defn:ruling}
A \textbf{normal ruling} $\ruling$ of $\front$ is a bijection between the left and right cusps and, for each identified pair of cusps, two paths in $\front$ from the left cusp to the right. We require that: 
\begin{enumerate}
\item Two paths of $\ruling$ intersect only at cusps and crossings;
\item The two paths between the same two cusps are called \textbf{companions} of one another. Companion paths intersect only at the cusps; and
\item Two paths meeting at a crossing may pass through each other; see, for example, the bottom row of Figure~\ref{f:ruling-pieces}. Alternatively, two paths meeting at a crossing and their companion paths may be arranged as in the top row of Figure~\ref{f:ruling-pieces}; we call such a crossing a \textbf{switch}.
\end{enumerate}
\end{definition}

\begin{figure}[t]
\centering
\includegraphics[scale=.9]{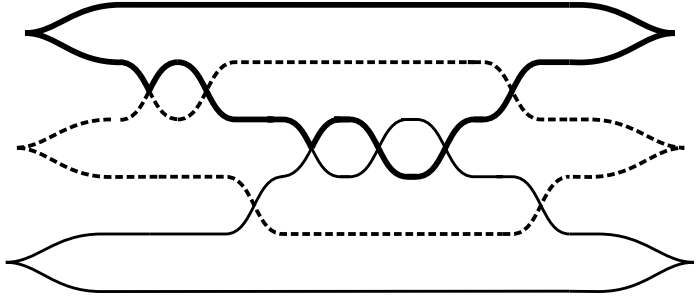}
\caption{A graded normal ruling with switches at the first and fourth crossings, returns at the sixth, seventh, and eighth crossings, and departures at the second, third, and fifth crossings.}
\label{f:ruling}
\end{figure}

A normal ruling is \textbf{graded} if all of its switched crossings are degree 0. A crossing is a \textbf{return} if the two paths meeting at the crossing and their companion paths are arranged as in one of the figures in the bottom row of Figure~\ref{f:ruling-pieces}. A return crossing is \textbf{graded} if it has degree 0. A crossing that is neither a switch nor a return is a \textbf{departure}; the arrangement of paths in a normal ruling near a departure can be seen by reflecting each figure in the bottom row of Figure~\ref{f:ruling-pieces} about a vertical axis. Figure~\ref{f:ruling} gives an example of a graded normal ruling.  An example of a graded normal ruling is given in Figure~\ref{f:ruling}. The set of graded normal rulings of $\front$ is $\mathcal{R}^0(\front)$. 

\subsubsection{2-graded Normal Rulings}
A normal ruling $\ruling$ of $\front$ is \textbf{2-graded} if all switched crossings of $\ruling$ are positive; see Figure~\ref{f:nearly-plat}~(b). The set of 2-graded normal rulings of $\front$ is $\mathcal{R}^2(\front)$ and the \textbf{2-graded normal ruling polynomial} of $\Leg$ is $$R_{\Leg}^2 (z) =  \sum_{\ruling \in \mathcal{R}^2(\front)} z^{j(\ruling)},$$ where $j(\ruling)=\#(\mbox{switches}) - \#(\mbox{right cusps})$. As is implied by the notation, the 2-graded normal ruling polynomial is a Legendrian isotopy invariant \cite{Chekanov2005}. Given $n \in \N$, we define $f_{\Leg}^n$ to be the number of graded rulings $\ruling \in \mathcal{R}^2(L)$ satisfying $j(\ruling) = n$. Note that $f_{\Leg}^n$ is the coefficient of $z^n$ in $R^2_L(z)$. We define $f_{\max}(\Leg)$ to be $f^l_{\Leg}$ where $l$ is $\max\{n \in \N: f_{\Leg}^n \neq 0\}$. Note that if a 2-graded normal ruling $\ruling$ contributes to the count $f_{\max}(\Leg)$, then $s(\ruling) \geq s(\ruling')$ for all $\ruling' \in \mathcal{R}^2(\Leg)$. We say such a $2$-graded normal ruling \textbf{maximizes the number of switches}.

By Theorem 4.1 of \cite{Rutherford}, $z\cdot R_{\Leg}^2(z)$ is the coefficient, as a polynomial in $z$, of $a^{-tb(\Leg) - 1}$ in the HOMFLY polynomial $P_K(a,z)$ of the smooth knot type $K$ of $\Leg$. Therefore, if two Legendrian knots $\Leg_1$ and $\Leg_2$ are smoothly isotopic to a knot $K$ and $\overline{tb}(K) = tb(\Leg_1) = tb(\Leg_2)$ holds, then $R_{\Leg_1}^2(z) = R_{\Leg_2}^2(z)$ and, thus, $f_{\max}(\Leg_1) = f_{\max}(\Leg_2)$ hold. For a smooth knot $K$, let $f_{\max}(K)$ be $f_{\max}(\Leg)$ for any maximal tb Legendrian representative $\Leg$ of $K$. By the previous discussion, $f_{\max}(K)$ is well-defined. 

The following two Propositions are used in the proof of Theorem~\ref{t:two-cusp}. Legendrian knots have a well-defined connect sum operation \cite{Etnyre2003}, which, in the case of front diagrams, is described in Figure~\ref{f:connectedsum}. Proposition~\ref{prop:fmax} follows from the observation that two 2-graded normal rulings that maximize the number of switches for $\Leg$ and $\Leg'$ individually form a 2-graded normal ruling that maximizes the number of switches for $\Leg \# \Leg'$ and this 2-graded normal ruling has a switch at the crossing in Figure~\ref{f:connectedsum}.

\begin{proposition}
\label{prop:fmax}
Given Legendrian knots $\Leg$ and $\Leg'$, $$f_{\max}(\Leg \# \Leg') = f_{\max}(\Leg) \cdot f_{\max}(\Leg').$$ 
\end{proposition}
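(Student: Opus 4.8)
The plan is to prove the equality by exhibiting a bijection between the $2$-graded normal rulings of $\front \# \front'$ that maximize the number of switches and the pairs $(\ruling_L, \ruling_{L'})$ with $\ruling_L \in \mathcal{R}^2(\Leg)$ and $\ruling_{L'} \in \mathcal{R}^2(\Leg')$ each maximizing the number of switches. As observed in the paragraph preceding the statement, since $j(\ruling) = s(\ruling) - \#(\mbox{right cusps})$ and the number of right cusps of a fixed front is constant, a $2$-graded normal ruling contributes to $f_{\max}(\Leg)$ exactly when it maximizes $s(\ruling)$; hence $f_{\max}(\Leg)$, $f_{\max}(\Leg')$, and $f_{\max}(\Leg \# \Leg')$ all count switch-maximizing rulings, and the proposition follows once such a bijection is produced and the switch counts are reconciled. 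Throughout, let $q_0$ be the crossing introduced by the connect sum in Figure~\ref{f:connectedsum}, and write $s_{\max}(\Leg)$ for the maximum of $s(\ruling)$ over $\ruling \in \mathcal{R}^2(\Leg)$.

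First I would make the combining direction precise. In any ruling the two paths meeting at a cusp are companions, so the two strands of $\front \# \front'$ emanating from the opened right cusp of $\front$ are companions in $\ruling_L$, and the two strands from the opened left cusp of $\front'$ are companions in $\ruling_{L'}$. Given switch-maximizing $\ruling_L$ and $\ruling_{L'}$, I would define $\Phi(\ruling_L, \ruling_{L'})$ to be the ruling that agrees with $\ruling_L$ on the $\front$ side and with $\ruling_{L'}$ on the $\front'$ side and declares $q_0$ a switch, re-pairing these two companion pairs so that the two factors' disks are joined. Using the local picture of Figure~\ref{f:connectedsum}, I would verify this is a genuine $2$-graded normal ruling: normality holds at $q_0$ because the four strands involved form the unique outermost configuration there, and the $2$-graded condition holds because $q_0$ is positive, where switches are permitted. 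Since the only new switch is at $q_0$, I obtain $s(\Phi(\ruling_L, \ruling_{L'})) = s(\ruling_L) + s(\ruling_{L'}) + 1$, so the maximal switch number of $\front \# \front'$ is at least $s_{\max}(\Leg) + s_{\max}(\Leg') + 1$.

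Next I would establish the decomposing direction. Given any $2$-graded normal ruling $\ruling$ of $\front \# \front'$, I would restrict its companion paths to each side. The key local fact, read from Figure~\ref{f:connectedsum}, is that $q_0$ is the only crossing joining the two factors, so away from $q_0$ the companion pairing decouples; this forces the restrictions to close up into valid rulings $\ruling_L \in \mathcal{R}^2(\Leg)$ and $\ruling_{L'} \in \mathcal{R}^2(\Leg')$ at the reconstituted cusps, with $s(\ruling) \le s(\ruling_L) + s(\ruling_{L'}) + 1$, the extra switch occurring precisely when $q_0$ is a switch. Thus the maximal switch number of $\front \# \front'$ is exactly $s_{\max}(\Leg) + s_{\max}(\Leg') + 1$. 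Finally, if a switch-maximizing $\ruling$ did not switch at $q_0$, then converting $q_0$ into a switch, a purely local move valid by the normality check above, would strictly increase $s(\ruling)$, a contradiction; hence every switch-maximizing ruling switches at $q_0$, restricts to switch-maximizing $\ruling_L, \ruling_{L'}$, and equals $\Phi(\ruling_L, \ruling_{L'})$. Since $\Phi$ is visibly injective (one recovers $\ruling_L, \ruling_{L'}$ by restriction), it is the desired bijection and $f_{\max}(\Leg \# \Leg') = f_{\max}(\Leg) \cdot f_{\max}(\Leg')$.

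The main obstacle I anticipate is the local analysis at $q_0$: verifying directly from Figure~\ref{f:connectedsum} both that declaring $q_0$ a switch always produces a normal configuration and that the companion structure of an arbitrary $2$-graded normal ruling of $\front \# \front'$ must decouple across $q_0$, so that restriction yields honest rulings of the two factors. Everything else, namely the switch-count bookkeeping, the identification of the $f_{\max}$-count with switch-maximizing rulings, and the injectivity of $\Phi$, is routine once this local compatibility is secured.
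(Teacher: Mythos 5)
Your proposal is correct and follows essentially the same route as the paper, which proves Proposition~\ref{prop:fmax} by the single observation that two switch-maximizing $2$-graded normal rulings of $\Leg$ and $\Leg'$ combine to a switch-maximizing ruling of $\Leg \# \Leg'$ with a switch at the connect-sum crossing. Your write-up simply makes explicit the converse (decomposition) direction and the bijectivity bookkeeping that the paper leaves implicit.
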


\begin{figure}[t]
\labellist
\small\hair 2pt
\pinlabel {$\Leg$} [tl] at 13 43
\pinlabel {$\#$} [br] at 80 28
\pinlabel {$\Leg'$} [tl] at 112 43
\pinlabel {$=$} [tl] at 147 43
\pinlabel {$\Leg$} [tl] at 180 43
\pinlabel {$\Leg'$} [tl] at 280 43
\endlabellist
\centering
\includegraphics[scale=.8]{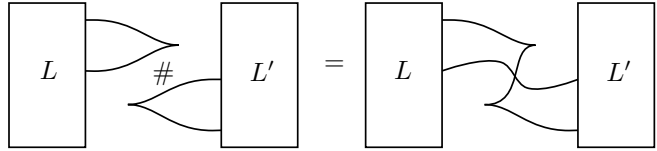}
\caption{Legendrian connect sum $\Leg \# \Leg'$.}
\label{f:connectedsum}
\end{figure}

\begin{proposition}
\label{prop:2bridgefmax}
If $K$ is a 2-bridge knot, then $f_{\max}(K) \in \{0,1\}$.
\end{proposition}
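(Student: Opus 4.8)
The plan is to connect the invariant $f_{\max}(K)$ to the HOMFLY polynomial of the 2-bridge knot $K$ and exploit what is known about HOMFLY polynomials of 2-bridge knots. Recall from the discussion preceding the statement that $z \cdot R^2_{\Leg}(z)$ is the coefficient of $a^{-tb(\Leg)-1}$ in the HOMFLY polynomial $P_K(a,z)$, where $\Leg$ is a maximal $tb$ representative of $K$. Since $f_{\max}(K)$ is the top $z$-coefficient of $R^2_{\Leg}(z)$, it is determined by the coefficient of the highest power $z^{l+1}$ appearing in the relevant $a^{-tb-1}$ term of $P_K(a,z)$. So I would first reduce the claim to a statement purely about the leading $z$-coefficient of a specified $a$-coefficient of the HOMFLY polynomial of a 2-bridge knot.

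The key step is then to invoke structural results about the HOMFLY polynomial of 2-bridge (rational) knots. Two-bridge knots are alternating, and for alternating knots there is a well-understood relationship between the HOMFLY polynomial and the genus/Seifert-theoretic data: in particular, the maximal $z$-degree term has coefficients tightly constrained. I would use the fact that a 2-bridge knot admits a particularly simple Seifert surface / band presentation, so that the top-$z$-degree part of each $a$-column of $P_K(a,z)$ has coefficients in $\{0, \pm 1\}$, or more directly that the relevant leading coefficient is $0$ or $1$ after accounting for signs. Concretely, I would either cite a known computation of $P_K$ for 2-bridge knots (e.g.\ via the formula for HOMFLY of rational knots in terms of continued-fraction data) or argue via the Kauffman/Morton bounds that for alternating knots of braid/bridge index $2$, the extremal $z$-coefficients are monomials with unit coefficient. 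From this, $f_{\max}(K) \in \{0,1\}$ follows: either the extremal column vanishes (giving $0$) or contributes a single maximal-switch ruling (giving $1$).

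Alternatively, and perhaps more in keeping with the combinatorial flavor of the paper, I would give a direct ruling-theoretic argument. A maximal $tb$ front diagram of a 2-bridge knot can be taken to be nearly plat with exactly two left cusps and two right cusps (bridge number $2$ corresponds to $2$ maxima in the front). With only two strands available to rule, the combinatorial possibilities for a $2$-graded normal ruling maximizing switches are extremely limited: the ruling is forced by the requirement that companion paths meet only at cusps together with the switch-maximizing condition, so there is at most one such ruling. I would make this precise by showing that on a 2-bridge front the pairing of the two left cusps with the two right cusps is essentially unique, and that once the cusp pairing is fixed the switch pattern maximizing the count is determined. If no graded normal ruling exists at all, $f_{\max}(K) = 0$; otherwise the uniqueness forces $f_{\max}(K) = 1$.

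The main obstacle I anticipate is controlling the combinatorics of the ruling on an arbitrary maximal $tb$ diagram of a 2-bridge knot, since "2-bridge" is a topological condition while rulings live on a specific front diagram. The subtlety is that a 2-bridge knot may have maximal $tb$ representatives whose fronts have more than two cusps, so I cannot simply assume a two-cusp front. To handle this cleanly I would lean on the HOMFLY reformulation rather than a bare-hands ruling count, since $f_{\max}(K)$ is a smooth knot invariant by the well-definedness established just before the statement; this lets me work with whatever convenient representative or known polynomial formula is available and sidesteps the dependence on the particular front diagram. Verifying the unit-coefficient claim for the extremal $z$-term of the 2-bridge HOMFLY polynomial is where the real content lies, and I would expect to cite an existing classification of these polynomials rather than re-derive it.
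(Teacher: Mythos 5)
Your second (ruling-theoretic) sketch is the one the paper actually pursues, but your proposal stalls at exactly the point you flag as the obstacle, and your fallback to HOMFLY does not close the gap. The paper resolves the ``which front diagram?'' issue not by avoiding fronts but by citing Ng's classification of maximal Thurston--Bennequin representatives of 2-bridge knots: every 2-bridge knot has \emph{some} maximal $tb$ representative $\Leg$ whose front has exactly four cusps, and since $f_{\max}(K)$ was already shown to be independent of the choice of maximal $tb$ representative, it suffices to count on that one front. You correctly observed that other maximal $tb$ fronts might have more cusps, but that is irrelevant once one good representative is in hand; this single citation is the missing ingredient that makes the direct approach work. Your HOMFLY route, by contrast, rests on an unverified claim that is not what the standard results give you: the Morton/Kauffman-type bounds and the known structure of HOMFLY polynomials of alternating or 2-bridge knots constrain the \emph{global} extremal $z$-degree, whereas $f_{\max}$ is the leading $z$-coefficient of the specific column $a^{-tb(\Leg)-1}$ singled out by Rutherford's theorem. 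You would need the leading coefficient of that particular column to be $0$ or $1$, and you give no argument or reference that actually establishes this; you acknowledge yourself that this is ``where the real content lies.''

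Even granting a four-cusp front, your claim that the switch-maximizing ruling is ``forced'' is asserted rather than proved, and the paper's argument here is not a triviality. It is an exchange argument: given two distinct 2-graded rulings $\ruling$ and $\ruling'$, look at the left-most crossing $q_i$ where one switches and the other does not; since the rulings agree to the left of $q_i$, the non-switching one must have a departure there, forcing a return at $q_{i+1}$ (positivity of $q_{i+1}$ uses that the front has only four strands and companion paths cannot cross), and one then builds a new ruling $\ruling''$ with switches at both $q_i$ and $q_{i+1}$, showing $\ruling'$ was not switch-maximal. Note also that the conclusion is not that the 2-graded ruling itself is unique --- the front may admit many --- but that at most one of them maximizes the number of switches. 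Your proposal contains the right high-level shape but is missing both the enabling citation and the actual combinatorial argument, so as written it does not constitute a proof.
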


\begin{proof}
By definition, $f_{\max}(K)$ is $f_{\max}(\Leg)$ for any maximal tb Legendrian representative of $K$. By \cite{Ng2001a}, a 2-bridge knot $K$ has a maximal tb Legendrian representative $\Leg$ whose front diagram $\front$ has exactly four cusps. Therefore, it suffices to show $f_{\max}(\Leg) \in \{0,1\}$ for such a Legendrian knot. If $\Leg$ admits no more than one 2-graded normal ruling, then the claim obviously holds. 

\begin{figure}[t]
\centering
\includegraphics[scale=.9]{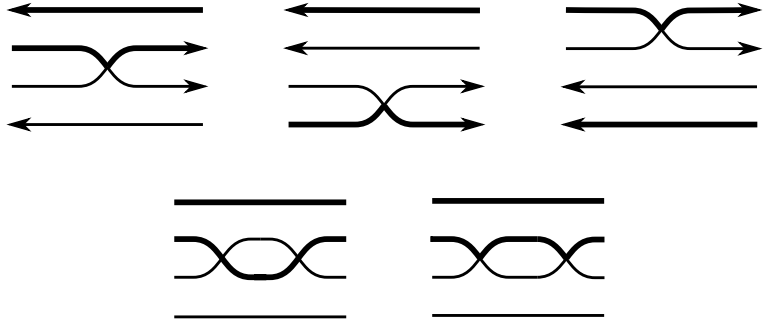}
\caption{(Top row) The orientation of companion paths near a switch of a 2-graded normal ruling. (Bottom row) An example of the 2-graded normal ruling $\ruling'$ (left) and $\ruling''$ (right) in the proof of Proposition~\ref{prop:2bridgefmax}}
\label{f:pos-neg-near-switches}
\end{figure}

Orient $\Leg$. In a 2-graded normal ruling, a switch may only occur at a positive crossing. Thus, in a 2-graded normal ruling $\ruling$, each path has a well-defined orientation and two paths that are companions of each another are oppositely oriented; see the top row of Figure~\ref{f:pos-neg-near-switches}. Label the crossings of the front diagram $\front$, from left to right, $q_1, \hdots, q_m$. Suppose $\ruling$ and $\ruling'$ are distinct 2-graded normal rulings of $\Leg$. We will show $\ruling$ and $\ruling'$ cannot both maximize the number of switches. The claim then follows directly. Since $\ruling \neq \ruling'$, there must exist a crossing that is a switch for one of $\ruling$ or $\ruling'$, but not for the other. Choose the left-most crossing $q_i$ for which this is the case and, without loss of generality, assume $q_i$ is a switch for $\ruling$, but not for $\ruling'$. Note that the paths of $\ruling$ and $\ruling'$ are identical to the left of $q_i$ and, therefore, $\ruling'$ has a departure at $q_i$ and, consequently, $q_i$ is not the right-most crossing of $\front$. The crossing $q_{i+1}$ must be a positive crossing, since $\ruling$ has a switch at $q_i$, companion paths cannot cross, and $\front$ has only 4 strands. Since $\ruling$ and $\ruling'$ agree to the left of $q_i$ and $\ruling'$ has a departure at $q_i$, $\ruling'$ must have a return at $q_{i+1}$. A new normal ruling $\ruling''$ may be constructed that agrees with $\ruling'$ to the left of $q_i$ and to the right of $q_{i+1}$ and has switches at $q_i$ and $q_{i+1}$; see, for example, the bottom row of Figure~\ref{f:pos-neg-near-switches}. Therefore, $\ruling'$ does not maximize the number of switched crossings. Therefore, $\front$ has a unique 2-graded normal ruling that maximizes the number of switches and $f_{\max} ( K ) = f_{\max}(\Leg) = 1$ holds.
\end{proof}

\subsection{Morse Complex Sequences}
\label{ss:MCSs}

We again begin with a fixed Legendrian knot $\Leg$ with rotation number 0 and nearly plat front diagram $\front$ with fixed Maslov potential $\Maslov$. The most general definition of a Morse complex sequence, abbreviated MCS, is given in \cite{Henry2014}, however the relevant definition for this article is in \cite{Henry2013}. Regardless, we are able to avoid most technical details of MCSs and instead work with two special types of MCSs. 

A \textbf{handleslide} on $\front$ is a vertical line segment whose endpoints are on strands of $\front$ with the same Maslov potential and which does not intersect a crossing or cusp of $\front$; see the vertical line segments in Figure~\ref{f:marked}. Loosely speaking, an MCS consists of a collection of handleslides and a finite sequence of $\Z / 2 \Z$ chain complexes with consecutive chain complexes related by chain maps that depend on the crossings and cusps of $\front$ and the handleslides. The MCSs we consider have a special form that allows the sequence of chain complexes to be recovered from $\front$ and the collection of handleslides. Thus, for our purposes, Morse complex sequences will be defined by a collection of handleslides on $\front$. 

\begin{figure}[t]
\centering
\includegraphics[scale=.9]{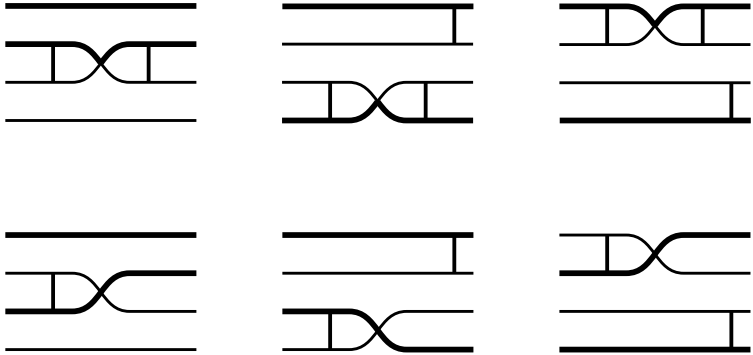}
\caption{The arrangement of handleslides near switches (top row) and graded returns (bottom row) in an SR-form MCS.}
\label{f:marked}
\end{figure}

\begin{definition}
\label{defn:SR-form}
An \textbf{SR-form Morse complex sequence} $\MCS$ of a front diagram $\front$ and graded normal ruling $\ruling$ consists of a collection of handleslides arranged as follows:

\begin{enumerate}
	\item Near each switched crossing $q$ of $\ruling$, handleslides of $\MCS$ are arranged as in the top row of Figure~\ref{f:marked};
	\item Let $R$ be a subset of the graded return crossings of $\ruling$. For each crossing $q$ in $R$, handleslides of $\MCS$ are arranged near $q$ as in the bottom row of Figure~\ref{f:marked}. We say $q$ is a \textbf{marked} graded return.  
\end{enumerate}
The set $MCS^{SR}(\front)$ consists of all SR-form MCSs of $\front$. A front diagram with an SR-form MCS and its associated graded normal ruling is given in Figure~\ref{f:SR-example}.
\end{definition}

\begin{figure}[t]
\centering
\includegraphics[scale=.9]{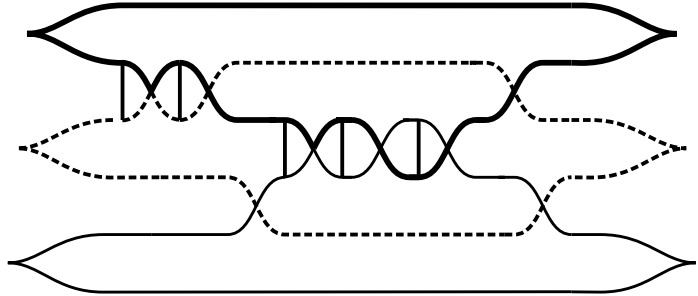}
\caption{A Morse complex sequence that is both SR-form and A-form.}
\label{f:SR-example}
\end{figure}

\begin{definition}
\label{defn:A-form}
An MSC $\MCS$ of $\front$ is an \textbf{A-form Morse complex sequence} if there exists a collection $\{p_1, \hdots, p_n\}$ of degree 0 crossings of $\front$ so that:

\begin{enumerate}
	\item For each $1 \leq i \leq n$, there exists a handleslide just to the left of $p_i$ with endpoints on the two strands of $\front$ that cross at $p_i$; and 
	\item $\MCS$ has no other handleslides. 
\end{enumerate}
The set $MCS^{A}(\front)$ consists of all A-form MCSs of $\front$. A front diagram with an A-form MCS is given in Figure~\ref{f:SR-example}.
\end{definition}

In \cite{Henry2013}, an MCS $\MCS$ is assigned a differential graded algebra $(\alg(\front), d^{\MCS})$. Note that $\alg(\front)$ is the same algebra as in the definition of the Chekanov-Eliashberg algebra given in Section~\ref{s:LCH}. Recall $A(\front)$ is the $\Z / 2 \Z$ vector space freely generated by labels assigned to the crossings and right cusps of $\front$. The restriction of $d$ to monomials of length 1 gives a map $d^{\MCS}_1 : A(\front) \to A(\front)$ and, in \cite{Henry2013}, it is shown that $d^{\MCS}_1 \circ d^{\MCS}_1 = 0$. Consequently, $(A(\front), d^{\MCS}_1)$ is a chain complex called the \textbf{linearization} of $(\alg(\front), d^{\MCS})$. We let $h^{\MCS}_i$ be the dimension in degree $i$ of the homology of $(A(\front), d^{\MCS}_1)$ and define the \textbf{MCS polynomial} to be $$P_{\MCS}(t) = \sum_{i \in \Z} h^{\MCS}_i t^i.$$ 

The proofs of Theorems~\ref{t:two-cusp}~and~\ref{t:infinite-family} require a careful analysis of the map $d^{\MCS}_1$ in the case that $\MCS$ is an SR-form MCS. Given generators $a$ and $b$ in $A(\front)$, the coefficient of $b$ in $d^{\MCS}_1 a$ is the mod 2 count of certain objects, called chord paths, originating at $a$ and terminating at $b$. Given an $x$-coordinate $x_0$ that is not the $x$-coordinate of any crossing, cusp, or handleslide, a \textbf{chord} $\lambda = (x_0, [i,j])$ is a vertical line segment with $x$-coordinate $x_0$ and endpoints on strands $i < j$ of \front, where the strands of $\front$ above $x = x_0$ are numbered $1, 2, \hdots$ from top to bottom. The following definition adapts Definition 5.1 of \cite{Henry2013} to the case that $\MCS$ is an SR-form MCS.

\begin{figure}[t]
\labellist
\small\hair 2pt
\pinlabel {(a)} [tl] at 17 88
\pinlabel {(b)} [tl] at 17 52
\pinlabel {(c)} [tl] at 17 0
\pinlabel {(d)} [tl] at 96 71
\pinlabel {(e)} [tl] at 169 71
\pinlabel {(f)} [tl] at 242 71
\pinlabel {(g)} [tl] at 313 71
\pinlabel {(h)} [tl] at 96 0
\pinlabel {(i)} [tl] at 169 0
\pinlabel {(j)} [tl] at 242 0
\pinlabel {(k)} [tl] at 313 0
\endlabellist
\centering
\includegraphics[scale=.9]{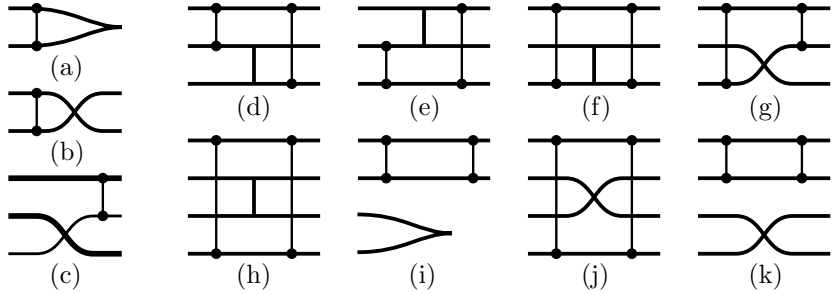}
\caption{Chords are represented pictorially as vertical lines with solid circles at the endpoints so as to distinguish them from handleslides. (a)-(b): The originating chord of a chord path. (c): An example of a terminating chord of a chord path. A second example can be seen by reflecting (c) across a horizontal axis. (d)-(k): Example behavior of consecutive chords in a chord path.}
\label{f:chord-path-fundamentals}
\end{figure}

\begin{definition}
\label{defn:chord-path}
Suppose $\MCS$ is an SR-form MCS of $\front$ with graded normal ruling $\ruling$. Suppose $a$ and $b$ are generators in $A(\front)$ with $|a| = |b| +1$ and the $x$-coordinate of $b$ is less than the $x$-coordinate of $a$. Let $x_{m+1} < x_{m} < \hdots < x_1$ be the $x$-coordinates of all crossings, cusps, and handleslides between $a$ and $b$, inclusive. In particular, $x_1$ (resp. $x_{m+1}$) is the $x$-coordinate of $a$ (resp. $b$). Choose $z_{m} < z_{m-1} < \hdots < z_{1}$ so that $x_{i+1} < z_i < x_{i}$ for all $1 \leq i \leq m$. A \textbf{chord path} from $a$ to $b$ is a finite sequence of chords $\Lambda = (\lambda_1, \hdots, \lambda_{m})$, so that: 
\begin{enumerate}
	\item For all $1 \leq i \leq m$, the $x$-coordinate of $\lambda_i$ is $z_i$;
	\item The endpoints of the chord $\lambda_1$ are on the two strands that cross at $a$, in the case that $a$ is a crossing, or on the two strands that terminate at $a$, in the case that $a$ is a right cusp; see Figure~\ref{f:chord-path-fundamentals} (a) and (b). We say $\Lambda$ \textbf{originates} at $a$;
	\item The formula $|a| = |b| +1$ holds, $\front$ is nearly plat, and all right cusps have degree 1. Consequently, $b$ must be a crossing. Number the strands of $\front$, from top to bottom,  $1, 2, \hdots$ just to the left of $b$ and suppose strands $k$ and $k+1$ cross at $b$. We require that, for the chord $\lambda_{m} = (z_{m},[i,j])$, either $j = k$ or $i=k+1$ hold, and the strands numbered $i$ and $j$ just to the \emph{left} of $b$ are companion paths of $\ruling$; see, for example, Figure~\ref{f:chord-path-fundamentals} (c). We say $\Lambda$ \textbf{terminates} at $b$.
	\item The endpoints of consecutive chords in a chord path satisfy conditions based on the crossing, cusp, or handleslide that appears between them.

\begin{enumerate}
	\item \textbf{Right Cusp or Crossing:} If a right cusp or crossing appears between $\lambda_i$ and $\lambda_{i+1}$, then the endpoints of $\lambda_i$ and $\lambda_{i+1}$ are on the same strands of $\front$; see, for example, Figure~\ref{f:chord-path-fundamentals} (g), (i), (j), and (k).
	\item \textbf{Left cusp:} Since $\front$ is nearly plat, the crossing $b$ is right of all left-cusps and so we need not consider this case. 
	\item \textbf{Handleslide:} Suppose a handleslide mark with endpoints on strands $k < l$ appears between chords  $\lambda_i = (z_i, [t_i, b_i])$ and $\lambda_i=(z_{i+1},[t_{i+1}, b_{i+1}])$. Then one of the following is satisfied:
		
\begin{enumerate}
	\item Equations $t_i = t_{i+1}$ and $b_i = b_{i+1}$ hold. See, for example, Figure~\ref{f:chord-path-fundamentals} (f) and (h); 
	\item Equations $b_i = l$, $b_{i+1} = k$, and $t_{i+1} = t_i$ and inequality $t_i < k$ hold. See Figure~\ref{f:chord-path-fundamentals} (d); 
	\item Equations $t_i = k$, $t_{i+1} = l$, and $b_{i+1} = b_i$ and inequality $b_i > l$ hold. See Figure~\ref{f:chord-path-fundamentals} (e).
\end{enumerate}
In the case of (ii) or (iii), we say the chord path \textbf{jumps along} the handleslide.
\end{enumerate}
	
\end{enumerate}

\end{definition}

In Figure~\ref{f:D_3-with-SRform}, a chord path is given that originates at $c_3$ and terminates at $a_3$. The set $\mmc^{\MCS}(a;b)$ consists of all chord paths originating at $a$ and terminating at $b$.

Given an SR-form MCS $\MCS$ of $\front$, the differential $d^{\MCS}_1$ of the linearization $(A(\front), d^{\MCS}_1)$ is defined on a generator $a$ of $A(\front)$ by $$d^{\MCS}_1 a = \sum \# \mmc^{\MCS}(a;b) b $$ where $\# \mmc^{\MCS}(a;b)$ is the mod 2 count of chord paths in $\mmc^{\MCS}(a;b)$ and the sum is over all generators $b$ of $A(\front)$.

\begin{remark}
\label{r:MCS-simplification}
Fix an SR-form MCS $\MCS$. Let $C_k \subset A(\front)$ be the $\Z / 2 \Z$ vector subspace generated by the set $\{q_j : |q_j|=k\}$ and $d^{\MCS}_{1,k} : C_k \to C_{k-1}$ be the restriction of the differential $d^{\MCS}_1$ to $C_k$. Let $n_k$ be the dimension of $C_k$ and $r^{\MCS}_k$ be the rank of $d^{\MCS}_{1,k}$. The coefficient $h^{\MCS}_k$ in $P_{\MCS}(t)$ is given by 
\begin{equation*}
h^{\MCS}_k = n_k - r^{\MCS}_{k} - r^{\MCS}_{k+1}.
\end{equation*}
\begin{enumerate}
	\item The Legendrian knots considered in Theorem~\ref{t:infinite-family} have the property that $n_k$ is $0$ if $|k| \geq 2$ holds. Therefore, $h^{\MCS}_k$ is $0$ if $|k| \geq 2$. Since $n_{-2}$ is $0$, $r_{-1}^{\MCS}$ is $0$ as well and $h_{-1} = n_{-1} - r_0^{\MCS}$ holds. By Corollary 7.12 of \cite{Henry2013}, 	
\begin{equation}
\label{eq:poly-set}
\{ P_{\MCS}(t) \}_{\MCS \in MCS(\front)}=\{ P_{\aug}(t) \}_{\aug \in Aug(\front)}
\end{equation}
holds, where $MCS(\front)$ is the set of MCSs of $\front$ as defined in Definition 4.2 of \cite{Henry2013}. Thus, the duality result, Theorem~\ref{t:duality}, applies to $(A(L), d^{\MCS}_1)$. Consequently, $h_1^{\MCS} = h_{-1}^{\MCS} + 1$ holds. The Thurston-Bennequin number $tb(\Leg)$ is computed by $P_{\aug}(-1)$, for any augmentation $\aug \in Aug(\front)$ and so Equation~\ref{eq:poly-set} implies $tb(\Leg)$ is $P_{\MCS}(-1)$. Thus, $h_0 = tb(\Leg) + 2 h_{-1}^{\MCS} + 1$ holds. Therefore, 
\begin{equation*}
P_{\MCS}(t) = (n_{-1} - r_0^{\MCS}) t^{-1} + (tb(\Leg) + 2 (n_{-1} - r_0^{\MCS}) + 1) + (n_{-1} - r_0^{\MCS}+1)t
\end{equation*}
Note that $P_{\MCS}(t)$ depends only on $tb(\Leg)$, $n_{-1}$ and $r^{\MCS}_0$ and only one of these three values, $r^{\MCS}_0$, depends on the SR-form MCS $\MCS$.  This observation will simplify the proof of Theorem~\ref{t:infinite-family}.
	\item The Legendrian knots considered in Theorem~\ref{t:two-cusp} have the property that $n_k$ is $0$ if $|k| \geq 3$.  An investigation similar to that above shows that $P_{\MCS}(t)$ depends only on $tb(\Leg)$, $n_{-1}$, $n_{-2}$, $r^{\MCS}_{0}$ and $r^{\MCS}_{-1}$ and only two of these values, $r^{\MCS}_0$ and $r^{\MCS}_{-1}$, depend on $\MCS$. This observation will simplify the proof of Theorem~\ref{t:two-cusp}.
\end{enumerate}
\end{remark}
\section{The Main Results}
\label{ch:results}

\begin{theorem}
\label{t:two-cusp}
If the front diagram of a Legendrian knot has exactly four cusps, then the Legendrian knot has at most one Chekanov polynomial. 
\end{theorem}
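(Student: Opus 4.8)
The plan is to translate the problem into the language of MCS polynomials and then exploit the rigidity forced by having only four strands. By Equation~\ref{eq:poly-set}, $\{P_{\aug}(t)\}_{\aug\in Aug(\front)}=\{P_{\MCS}(t)\}_{\MCS\in MCS(\front)}$, and since it suffices to work with the SR-form MCSs used throughout Remark~\ref{r:MCS-simplification} (every Chekanov polynomial being realized by such an $\MCS$), the theorem will follow once I show that all SR-form MCSs of $\front$ have the same MCS polynomial. By Remark~\ref{r:MCS-simplification}(2), $P_{\MCS}(t)$ is completely determined by the MCS-independent data $tb(\Leg)$, $n_{-1}$, and $n_{-2}$, together with the two ranks $r^{\MCS}_0$ and $r^{\MCS}_{-1}$. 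The theorem therefore reduces to a single claim: for a four-cusp front, $r^{\MCS}_0$ and $r^{\MCS}_{-1}$ are independent of the SR-form MCS $\MCS$.

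I would first pin down the combinatorial structure. A nearly plat four-cusp front has exactly two left and two right cusps, so at every vertical slice it has at most four strands; numbering them $1,2,3,4$ from top to bottom, each chord $\lambda=(x_0,[i,j])$ is one of only six segments with $1\le i<j\le 4$. After fixing the Maslov potential I would list the possible crossing degrees and confirm that $A(\front)$ is supported in degrees $-2,\dots,2$. I would then classify the graded normal rulings: because companion paths may not cross and only four strands are available, the possible rulings are extremely constrained --- this is the same four-strand rigidity that underlies Proposition~\ref{prop:2bridgefmax}, and indeed the smooth knot type is $2$-bridge. An SR-form MCS is then the data of one such ruling together with a subset of its graded returns to mark.

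The heart of the argument is to compute $r^{\MCS}_0=\rank d^{\MCS}_{1,0}$ and $r^{\MCS}_{-1}=\rank d^{\MCS}_{1,-1}$ from Definition~\ref{defn:chord-path} and show they never change. A chord path contributing to $d^{\MCS}_{1,0}$ runs from a degree-$0$ crossing to a degree-$(-1)$ crossing, and one contributing to $d^{\MCS}_{1,-1}$ runs from a degree-$(-1)$ crossing to a degree-$(-2)$ crossing; with only six chord types available, the chord paths between any two fixed crossings can be enumerated explicitly, and the matrices of $d^{\MCS}_{1,0}$ and $d^{\MCS}_{1,-1}$ are small. I would then show that inserting, deleting, or sliding a handleslide (that is, changing which returns are marked, or changing the switching pattern within the allowed rulings) alters these matrices only by invertible row and column operations, so that their ranks are preserved; the finiteness supplied by the four-strand rigidity is what should make this case analysis terminate.

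The main obstacle is exactly this rank invariance. In contrast to Theorem~\ref{t:infinite-family}, where additional strands let $r^{\MCS}_0$ vary and thereby produce several distinct Chekanov polynomials, here I must rule out any such variation, and the delicate point is to track how marked graded returns contribute to chord paths passing through several crossings. If a uniform move-based argument proves hard to complete, I would instead enumerate the few graded rulings permitted on a four-strand front and compute $r^{\MCS}_0$ and $r^{\MCS}_{-1}$ directly for each, together with every choice of marked returns, verifying that the same two values always occur.
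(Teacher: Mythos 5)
Your reduction is exactly the paper's: pass to SR-form MCSs via Equation~\ref{eq:poly-set} and Remark~\ref{r:MCS-simplification}(2), so that everything comes down to showing $r^{\MCS}_0$ and $r^{\MCS}_{-1}$ are independent of $\MCS$. But that independence is precisely where your argument has a gap. Your proposed mechanism --- that inserting or deleting handleslides (i.e., changing which graded returns are marked, or changing the ruling) alters the matrices of $d^{\MCS}_{1,0}$ and $d^{\MCS}_{1,-1}$ only by invertible row and column operations --- is neither established nor true for SR-form MCSs in general: Theorem~\ref{t:infinite-family} is an explicit example where marking additional graded returns changes $r^{\MCS}_0$. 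You correctly flag this as the main obstacle, but you do not supply the idea that overcomes it in the four-cusp case. The paper's idea is a Maslov potential obstruction: normalizing $\Maslov$ to have minimum value $0$, a chord in a chord path originating at a degree $0$ (resp.\ degree $-1$) crossing always joins strands whose Maslov potentials differ by $0$ (resp.\ $-1$), while the handleslides of an SR-form MCS sit near switches and marked graded returns in the configurations of Figure~\ref{f:marked}, whose endpoint strands have Maslov potentials incompatible with a jump by such a chord. Hence the relevant chord paths never interact with the handleslides at all, and the matrices are literally identical as $\MCS$ varies over a fixed ruling --- no row/column operations are needed. A second point you omit entirely: the termination condition in Definition~\ref{defn:chord-path}(3) refers to companion paths of $\ruling$, so one must also check that every graded normal ruling of a four-cusp front is arranged identically near each degree $-1$ and degree $-2$ crossing; the paper does this by a short case analysis on the two possible Maslov potential assignments at the two left cusps.

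Your fallback --- enumerating the graded rulings and marked-return choices and computing the two ranks directly --- does not close the gap either, because a four-cusp front can have arbitrarily many crossings: the matrices of $d^{\MCS}_{1,0}$ and $d^{\MCS}_{1,-1}$ have sizes governed by $n_0$, $n_{-1}$, and $n_{-2}$, which are unbounded over the class of fronts the theorem covers, so "the matrices are small" fails and there is no finite list of cases whose verification would prove the general statement. Any successful argument must be uniform in the front diagram, which is exactly what the Maslov potential argument provides.
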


\begin{proof}
Suppose the front diagram $\front$ of the Legendrian knot $\Leg$ has exactly four cusps. If $Aug(\front)$ is empty, then $\Leg$ has no Chekanov polynomials. Thus, we may assume $Aug(\front)$ is non-empty. We may assume $\front$ is nearly plat, since every Legendrian knot is equivalent to a Legendrian knot with such a front diagram by a Legendrian isotopy that does not change the number of cusps. Fix $\aug \in Aug(\front)$. We will show $P_{\aug}(t)$ is independent of $\aug$ and, thus, the result follows. 

First we make two observations that allow us to translate this problem from augmentations to SR-form MCSs. Combining Theorem 1.6 in \cite{Henry2011} and Theorem 5.5 of \cite{Henry2013}, $$\{ P_{\MCS}(t) \}_{\MCS \in MCS^{SR}(\front)} = \{ P_{\MCS}(t) \}_{\MCS \in MCS(\front)}$$ holds. By Corollary 7.12 of \cite{Henry2013}, $$\{ P_{\MCS}(t) \}_{\MCS \in MCS(\front)}=\{ P_{\aug}(t) \}_{\aug \in Aug(\front)}$$ holds. Therefore, $$\{ P_{\MCS}(t) \}_{\MCS \in MCS^{SR}(\front)} = \{ P_{\aug}(t) \}_{\aug \in Aug(\front)}$$ holds and so it suffices to show that, given any SR-form MCS $\MCS$, $P_{\MCS}(t)$ is independent of $\MCS$. 

Since $\front$ has exactly two left cusps, the dimension of $C_k$, also denoted $n_k$, is $0$ if $|k| \geq 3$. Therefore, by Remark~\ref{r:MCS-simplification} it suffices to show that $r^{\MCS}_0$ and $r^{\MCS}_{-1}$ are independent of $\MCS$. The differential $d^{\MCS}_{1,0}$ (resp. $d^{\MCS}_{1,-1}$) is determined by chord paths that originate at a crossing of degree $0$ (resp. degree $-1$) and terminate at a crossing of degree $-1$ (resp. degree $-2$). Let $\ruling$ be the graded normal ruling associated with $\MCS$ and fix a Maslov potential $\Maslov$ so that the smallest value assigned to a strand of $\front$ by $\Maslov$ is 0. Since a switch of a graded normal ruling occurs only at degree 0 crossings, every path in a graded normal ruling has a well-defined Maslov potential. Since two companion paths originate at a common left cusp, their Maslov potentials differ by 1 and the path with the larger $z$-coordinate has the larger Maslov potential. Note that if $\lambda_i = (z_i, [k,l])$ is a chord in a chord path originating from a degree $0$ (resp. degree $-1$) crossing, then $\Maslov(k) - \Maslov(l) = 0$ holds (resp. $\Maslov(k) - \Maslov(l) = -1$ holds). By looking at the arrangement of handleslide marks of $\MCS$ near switches and marked graded returns of $\ruling$ (see Figure~\ref{f:marked}) and the Maslov potentials of the paths of $\ruling$ near such handleslides, we see that it is not possible for a chord path originating from either a degree $0$ or $-1$ crossing to jump along a handleslide mark of $\MCS$. Therefore, $r^{\MCS}_0$ and $r^{\MCS}_{-1}$ do not depend on the handleslide marks of $\MCS$. 

We must also check that all graded normal rulings of $\front$ look identical near degree -1 and -2 crossings, since, by Definition~\ref{defn:chord-path} (3), the arrangement of a graded normal ruling near such crossings determines which chord paths can terminate at that crossing.  There are two possibilities for the Maslov potential values at the left cusps of $\front$.
\begin{figure}[t]
\labellist
\small\hair 2pt
\pinlabel {(a)} [tr] at 50 93
\pinlabel {1} [tr] at 0 150
\pinlabel {0} [tr] at 0 136
\pinlabel {1} [tr] at 0 120
\pinlabel {0} [tr] at 0 104

\pinlabel {(b)} [tr] at 167 93
\pinlabel {1} [tr] at 115 150
\pinlabel {2} [tr] at 115 136
\pinlabel {0} [tr] at 115 120
\pinlabel {1} [tr] at 115 104

\pinlabel {(c)} [tr] at 282 93
\pinlabel {1} [tr] at 230 150
\pinlabel {2} [tr] at 230 136
\pinlabel {0} [tr] at 230 120
\pinlabel {1} [tr] at 230 104

\pinlabel {(d)} [tr] at 50 -2
\pinlabel {1} [tr] at 0 56
\pinlabel {0} [tr] at 0 40
\pinlabel {2} [tr] at 0 24
\pinlabel {1} [tr] at 0 8

\pinlabel {(e)} [tr] at 167 -2
\pinlabel {2} [tr] at 115 56
\pinlabel {1} [tr] at 115 40
\pinlabel {0} [tr] at 115 24
\pinlabel {1} [tr] at 115 8

\pinlabel {(f)} [tr] at 282 -2
\pinlabel {1} [tr] at 230 56
\pinlabel {2} [tr] at 230 40
\pinlabel {1} [tr] at 230 24
\pinlabel {0} [tr] at 230 8

\endlabellist
\centering
\includegraphics[scale=.9]{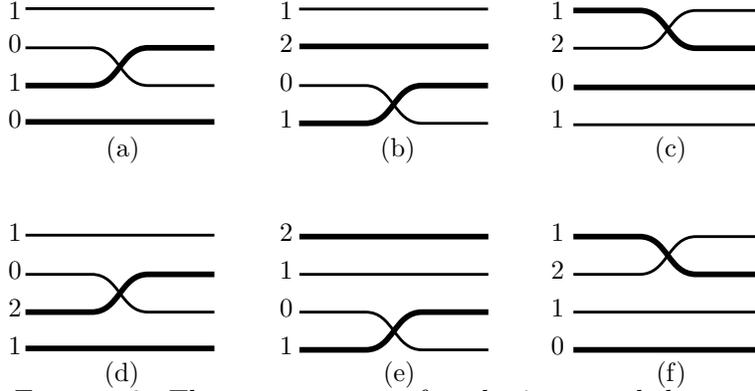}
\caption{The arrangement of paths in a graded normal ruling near a degree -1 or -2 crossing. The Maslov potential of each path is indicated.}
\label{f:neg-crossings}
\end{figure}
\begin{enumerate}
	\item The Maslov potential $\Maslov$ assigns the upper strands of both left cusps the number 1. As a consequence, there are no degree -2 crossings, since no two strands of $\front$ have Maslov potentials that differ by 2.  The conditions satisfied by the Maslov potentials of companion paths imply that, near a degree -1 crossing, the paths of a graded normal ruling must be arranged as in Figure~\ref{f:neg-crossings} (a).  
	\item The Maslov potential  $\Maslov$ assigns the upper strand of one left cusp the number 2 and the upper strand of the other left cusp the number 1. The conditions satisfied by the Maslov potentials of companion paths imply that, near a degree -2 (resp. -1) crossing, the paths of a graded normal ruling must be arranged as in Figure~\ref{f:neg-crossings} (d) (resp. Figure~\ref{f:neg-crossings} (b),(c),(e), or (f)).
\end{enumerate}
Note that in both cases, the arrangement of paths in a graded normal ruling near a degree -1 or -2 crossing depends only on $\Maslov$, which, in turn, depends only on $\front$. We can finally conclude that $r^{\MCS}_0$ and $r^{\MCS}_{-1}$ are independent of $\MCS$ and the result follows. 
\end{proof}
\begin{figure}[t]
\centering
\includegraphics[scale=.75]{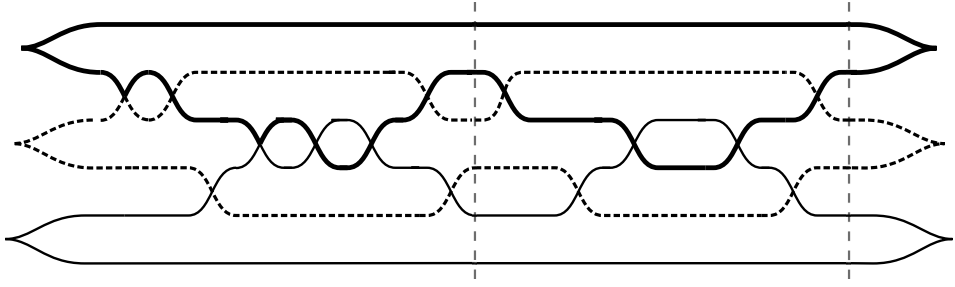}
\caption{The front diagram $\front_3$ in Theorem~\ref{t:infinite-family}. The crossings between the two vertical dotted lines are notated by $\omega = \sigma_2 \sigma_4 \sigma_3 \sigma_3 \sigma_4 \sigma_2$.}
\label{f:D_3}
\end{figure}

\begin{theorem}
\label{t:infinite-family}
For any natural number $m \geq 2$, there exists a front diagram $\front_m$ with graded normal ruling $\ruling$ and augmentations $\aug_1, \hdots, \aug_m$ so that:
\begin{enumerate}
	\item For all $1 \leq i \leq m$, $\Psi ( \aug_i) = \ruling$; 
	\item If $i \neq j$, then $P_{\aug_i}(t) \neq P_{\aug_j}(t)$; and
	\item The smooth knot type of $\front_m$ is prime.
\end{enumerate}\end{theorem}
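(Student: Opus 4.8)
The plan is to write down the family $\front_m$ by hand, read off its Chekanov polynomials through the Morse complex sequence dictionary, and settle primality with the switch-maximizing ruling count $f_{\max}$. First I would define $\front_m$ by generalizing the six-strand diagram $\front_3$ of Figure~\ref{f:D_3}: a nearly plat front with three left and three right cusps whose negative crossings are arranged so that the grading is supported in degrees $-1$, $0$, and $1$ only, i.e.\ $n_k = 0$ whenever $|k| \geq 2$. This is exactly the hypothesis of Remark~\ref{r:MCS-simplification}(1), so for every SR-form MCS $\MCS$ of $\front_m$ the polynomial reduces to
\begin{equation*}
P_{\MCS}(t) = (n_{-1} - r_0^{\MCS})t^{-1} + \big(tb(\Leg) + 2(n_{-1} - r_0^{\MCS}) + 1\big) + (n_{-1} - r_0^{\MCS} + 1)t,
\end{equation*}
where $tb(\Leg)$ and $n_{-1}$ are fixed by $\front_m$ alone. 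Hence two SR-form MCSs have different polynomials precisely when their ranks $r_0^{\MCS}$ differ, and the whole problem of separating Chekanov polynomials collapses to producing SR-form MCSs with $m$ distinct values of $r_0^{\MCS}$.

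Next I would fix a single graded normal ruling $\ruling$ on $\front_m$, chosen so that it has at least $m-1$ graded return crossings whose marking affects the degree-$0$ differential independently. Working with A-form MCSs as the combinatorial model for augmentations --- under which the ruling carried by the MCS equals the Ng--Sabloff image $\Psi(\aug)$ --- I would build $m$ augmentations $\aug_1, \dots, \aug_m$ whose associated SR-form MCSs $\MCS_1, \dots, \MCS_m$ all carry this same ruling $\ruling$ but mark successively larger subsets of those returns; this gives part (1) immediately. To obtain part (2) I would compute $r_0^{\MCS_i}$ directly from Definition~\ref{defn:chord-path}: the map $d^{\MCS_i}_{1,0}$ is the mod-$2$ count of chord paths from degree-$0$ to degree-$(-1)$ crossings, and I would show that each additional marked return introduces (or kills) exactly one independent relation, so that $r_0^{\MCS_1}, \dots, r_0^{\MCS_m}$ run through $m$ consecutive integers. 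The displayed formula then forces $P_{\aug_i}(t) \neq P_{\aug_j}(t)$ whenever $i \neq j$.

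For primality, part (3), I would combine the bridge-number bound with $f_{\max}$. Since $\front_m$ has six cusps, its smooth type $K_m$ has bridge number at most three. If $K_m = K' \# K''$ with $K'$, $K''$ nontrivial, then additivity of bridge number forces $b(K') = b(K'') = 2$, so both factors are $2$-bridge; Propositions~\ref{prop:2bridgefmax} and \ref{prop:fmax} then give $f_{\max}(K_m) = f_{\max}(K')\cdot f_{\max}(K'') \in \{0,1\}$, and the same bound holds if $K_m$ is itself $2$-bridge or trivial. It therefore suffices to exhibit two distinct switch-maximizing $2$-graded normal rulings of $\front_m$, yielding $f_{\max}(K_m) \geq 2$ and a contradiction, after checking that $\front_m$ attains $\overline{tb}(K_m)$ so that $f_{\max}$ may be read off from $\front_m$.

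I expect the computation of the ranks $r_0^{\MCS_i}$ in part (2) to be the main obstacle: one must track chord paths through the central crossing word of $\front_m$ carefully enough to prove that marking the chosen returns changes the degree-$0$ rank by exactly one each time, and that no unexpected chord paths appear or cancel. A secondary subtlety is confirming that the A-form MCSs really do correspond to genuine augmentations, all sent to the single ruling $\ruling$ by $\Psi$, and that $\front_m$ is a maximal-$tb$ diagram so that the $f_{\max}$ argument applies.
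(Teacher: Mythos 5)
Your proposal follows essentially the same route as the paper: the paper takes $\front_m = \sigma_2\sigma_2\sigma_4\sigma_3\sigma_3\sigma_3\sigma_2\sigma_4(\omega)^{m-2}$ with $\omega = \sigma_2\sigma_4\sigma_3\sigma_3\sigma_4\sigma_2$, fixes the ruling $\ruling$ with switches at $q_1,q_4$ and graded returns at $b_2,\dots,b_m$, marks successively larger subsets of those returns to get SR-form (equivalently A-form) MCSs with $d^{\MCS_i}_1 c_j = \sum_{k=2}^{j} a_k$ and hence $r_0^{\MCS_i}=i-1$, and proves primality by exactly your bridge-number plus $f_{\max}$ argument, computing $f_{\max}(\Leg_m)=2^{m+1}$ from two switch-maximizing $2$-graded rulings per $\omega$-block. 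The steps your outline defers --- the explicit crossing word, the chord-path verification that the degree-$0$ ranks run through $0,1,\dots,m-1$, and the exhibition of the two maximizing rulings --- are precisely where the paper does its work, so the plan is the right one but those computations still need to be carried out to have a proof of the existence statement.
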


\begin{proof}

For each $m \geq 2$, the front diagram $\front_m$ is nearly plat with 3 right cusps; the first such front diagram, $\front_2$, appears in Figure~\ref{f:nearly-plat}. Let $\sigma_i$ denote a crossing between strands $i$ and $i+1$, where the strands of $\front_m$ are numbered $1, 2, \hdots, 6$, from top to bottom, just to the left of the crossing. The front diagram $\front_2$ appears in Figure~\ref{f:nearly-plat} and, in sigma notation, is given by $\sigma_2 \sigma_2 \sigma_4 \sigma_3 \sigma_3 \sigma_3 \sigma_2 \sigma_4$. Let $ \omega$ be $\sigma_2 \sigma_4 \sigma_3 \sigma_3 \sigma_4 \sigma_2$. For $m>2$, the front diagram $\front_m$ is given by $\sigma_2 \sigma_2 \sigma_4 \sigma_3 \sigma_3 \sigma_3 \sigma_2 \sigma_4 (\omega)^{m-2}$. The front diagram $\front_3$ appears in Figure~\ref{f:D_3}. Let $\Leg_m$ be the Legendrian knot with front diagram $\front_m$ and $K_m$ be the smooth knot type of $\Leg_m$. 

Fix $m \geq 2$. We give names to those crossings in $\front_m$ that are important in the remainder of the proof. Beginning at the left-most crossing, label the first seven crossings $q_1, q_2, a_2, q_3, q_4, b_2$, and $c_2$; see Figure~\ref{f:D_3-with-SRform}. For $m>2$, label the second, fourth, and sixth crossings in the $i^{th}$ occurrence of $\omega$ in $\front_m$ by $a_i$, $b_i$, and $c_i$, respectively, see Figure~\ref{f:D_3-with-SRform}. 

The front diagram $\front_m$ has a graded normal ruling $\ruling$ with switches at $q_1$ and $q_4$ and graded returns at $b_2, \hdots, b_m$. For each $1 \leq i \leq m$, we define an SR-form MCS $\MCS_i$ of $\front$ and $\ruling$ as follows. Place handleslide marks around the switched crossings $q_1$ and $q_4$ as defined in Definition~\ref{defn:SR-form}; see the top left image in Figure~\ref{f:marked}. The set of marked graded returns $R_i$ of $\MCS_i$ is 
$$
R_i = \left\{ \begin{array}{rl}
 \{b_2\} &\mbox{ if $i=1$} \\
 \emptyset &\mbox{ if $i=2$} \\
  \{b_3, \hdots, b_i\} &\mbox{ if $3 \leq i \leq m$.}
       \end{array} \right.
$$
Place handleslide marks near each crossing in $R_i$ as in the bottom left image in Figure~\ref{f:marked}. 

By Definition~\ref{defn:A-form}, $\MCS_i$ is also an A-form MCS for each $1 \leq i \leq m$. By Theorem 5.2 of \cite{Henry2014}, there exists an explicit bijection $\Phi: MCS^A(\front) \to Aug(\front)$; for a crossing $q$, the A-form MCS $\MCS$ has a handleslide just to the left of $q$ between the strands crossing at $q$ if and only if $\Phi(\MCS) (q) =1$. Define $\aug_i$ to be $\Phi(\MCS_i)$. From the definition of the map $\Phi$, we see that the set of augmented crossings $A_i$ of $\aug_i$ is 
$$
A_i = \left\{ \begin{array}{rl}
 \{q_1, q_2, q_3, q_4, b_2\} &\mbox{ if $i=1$} \\
 \{q_1, q_2, q_3, q_4\} &\mbox{ if $i=2$} \\
 \{q_1, q_2, q_3, q_4, b_3, \hdots, b_i\} &\mbox{ if $3 \leq i \leq m$.}
       \end{array} \right.
$$
From the algorithmic construction of the map $\Psi: Aug(\front) \to R^{0}(\front)$ in \cite{Ng2006}, $\Psi(\aug_i)$ is $\ruling$ for all $1 \leq i \leq m$. From Theorem 7.3 of \cite{Henry2013}, $P_{\aug_i}(t) = P_{\MCS_i}(t)$ holds. Consequently, we may use the chord path approach given in Section~\ref{ss:MCSs} to compute $P_{\aug_i}(t)$. 

\begin{figure}[t]
\labellist
\small\hair 2pt
\pinlabel {$q_1$} [bl] at 55 100
\pinlabel {$q_2$} [bl] at 78 100
\pinlabel {$a_2$} [bl] at 97 51
\pinlabel {$q_3$} [bl] at 121 77
\pinlabel {$q_4$} [bl] at 150 77
\pinlabel {$b_2$} [bl] at 179 76
\pinlabel {$c_2$} [bl] at 205 100
\pinlabel {$a_3$} [bl] at 282 51
\pinlabel {$b_3$} [bl] at 363 76
\pinlabel {$c_3$} [bl] at 404 100
\endlabellist
\centering
\includegraphics[scale=.75]{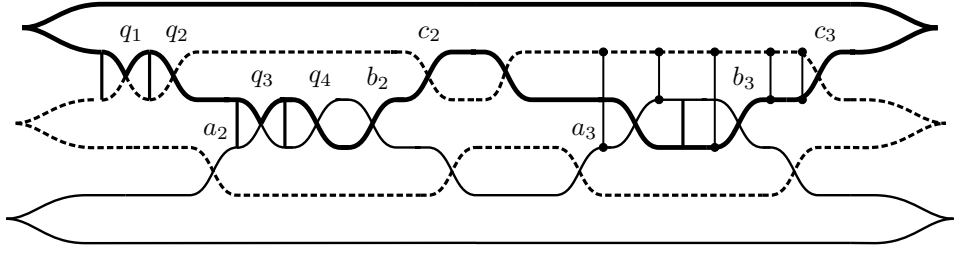}
\caption{The SR-form $\MCS_3$ on the front diagram $\front_3$, including the only chord path in the set $\mmc^{\MCS}(c_3;a_3)$, which determines the coefficient of $a_3$ in $d c_3$.}
\label{f:D_3-with-SRform}
\end{figure}

As noted in Remark~\ref{r:MCS-simplification}~(1), $P_{\MCS_i} (t)$ depends only on $tb(\Leg_m)$, $n_{-1}$, and $r^{\MCS_i}_{0}$. It is straightforward to verify that $tb(\Leg_m) = 2m - 3$. The set of degree -1 crossings is $\{a_2, \hdots, a_m\}$ and so $n_{-1} = m-1$ holds. An investigation of chord paths originating at a degree $0$ crossing reveals the following. If $i=1$, $d^{\MCS_i}_1 q=0$ holds for all degree 0 crossings and so $r^{\MCS_1}_0$ is $0$. If $2 \leq i \leq m$,  for a degree 0 crossing $q$ of $\front_m$, $d^{\MCS_i}_1 q=0$ holds, unless  $q \in \{c_2, \hdots, c_i\}$, in which case, if $2 \leq j \leq i$, then $d^{\MCS_i}_1 c_j = \sum_{k=2}^{j} a_k$. Figure~\ref{f:D_3-with-SRform} includes the only chord path in the set $\mmc^{\MCS}(c_3;a_3)$. Therefore, the rank of $d^{\MCS_i}_1$ is $i-1$ and so $r^{\MCS_i}_0 = i-1$ holds for all $1 \leq i \leq m$.

Since $tb(\Leg_m) = 2m - 3$, $n_{-1} = m-1$, and $r^{\MCS_i}_0 = i-1$ hold, Remark~\ref{r:MCS-simplification}~(1) and the formula $P_{\MCS_i} (t) = P_{\aug_i} (t)$ imply that 
\begin{equation}
\label{eq:MCS-polys}
P_{\aug_i} (t) = (m-i)t^{-1} + (4m-2i -2) + (m-i+1).
\end{equation} 
Therefore, for any $m \geq 2$ and $1 \leq i < j \leq m$, $P_{\aug_i}(t)$ is not equal to $P_{\aug_j}(t)$, but $\Psi(\aug_i) = \ruling$ for all $1 \leq i \leq m$.

We now show that $K_m$ is prime. Suppose, for contradiction, that $K_m$ is $K \# K'$. As $\front_m$ has exactly 3 left cusps, the bridge number of $K_m$, denoted $br(K_m)$, is at most 3. A classical result of Schubert \cite{Schubert} states $br(K_m) = br(K) + br(K') + 1$ holds. Therefore, $br(K)$ and $br(K')$ are both 2. By Theorem 3.4 of \cite{Etnyre2003}, there exist Legendrian knots $\Leg$ and $\Leg'$ smoothly isotopic to $K$ and $K'$, respectively, so that $\Leg_m = \Leg \# \Leg'$. If we connect sum $\Leg$ and $\Leg'$ as in Figure~\ref{f:connectedsum}, then it is clear that $\Leg_m$ admits a graded normal ruling if and only if $\Leg$ and $\Leg'$ both admit a graded normal ruling. A Legendrian knot admitting a graded normal ruling has maximal tb within its smooth knot class and rotation number $0$. Therefore, since $\Leg_m$ admits a graded normal ruling, $\Leg$ and $\Leg'$ do as well and both have maximal tb within their smooth knot class and rotation number 0. By Proposition~\ref{prop:2bridgefmax}, $f_{\max}(K)$ and $f_{\max}(K')$ are either 0 or 1 and, consequently, $f_{\max}(\Leg)$ and $f_{\max}(\Leg')$ are either 0 or 1. By Proposition~\ref{prop:fmax}, $f_{\max}(\Leg_m) = f_{\max}(\Leg) \cdot f_{\max}(\Leg')$ holds. Therefore, $f_{\max}(\Leg_m)$ is either 0 or 1. However, $f_{\max}(\Leg_m)$ is $2^{m+1}$, as we shall now demonstrate. It is easily verified that there are two 2-graded normal rulings of $\front_2$ that maximize the number of switches. The first has switches at crossings 1, 4, 5, and 6 and the second has switches at crossings, 1, 2, 5, and 7, where the crossings of $\front_m$ are numbered from left to right. Therefore,  $f_{\max}(\Leg_0)=2$. The block of crossings $\omega$ has exactly 2 possible 2-graded normal rulings and each has 2 switches. The first has switches at crossings 1 and 6 and the second has switches at crossings 3 and 4. Therefore, for $m>2$, $f_{\max}(\Leg_m) = 2 f_{\max}(\Leg_{m-1})$ holds. Therefore, $f_{\max}(\Leg_m)=2^{m+1}$ holds. We have arrived at a contradiction and, thus, $K_m$ must be prime. 
\end{proof}


\begin{remark}
Fix $m \geq 2$ and let $\Leg_m$ be the Legendrian knot from Theorem 2 whose front diagram is $\front_m$. Every Chekanov polynomial of $\Leg_m$ is achieved by an augmentation from the set $\{ \aug_1, \hdots, \aug_m\}$ from Theorem~\ref{t:infinite-family} as we shall now show. Let $\aug$ be an augmentation of $\front_m$ with Chekanov polynomial $P_{\aug}(t) = \sum_{i \in \Z} h_i t^i$. Let $n_i$ be the number of crossings of degree $i$ and $r_i$ be the rank of the map $ \df_{1,i+1}^{\aug}$. Then $h_i$ equals $n_i - r_i - r_{i-1}$. Note that $h_i$ is $0$ for all $|i| \geq 2$, since $n_i$ is $0$ for all $|i| \geq 2$. In addition, $tb(\Leg_m) = P_{\aug}(-1) = -h_{-1} + h_0 - h_1$ holds and, by Theorem~\ref{t:duality}, $h_{1} = h_{-1} + 1$. Therefore,  
\begin{align*}
h_0 &= tb(\Leg_m) + 2 h_{-1} + 1& &\\ \nonumber
&= w(\front_m) - 3 + 2 h_{-1} + 1& &[tb(\Leg_m)=w(\front_m)-3] \\ \nonumber
&= w(\front_m) - 3 + 2 ( n_{-1} - r_{-1} - r_{-2}) + 1& &[h_{-1} = n_{-1}-r_{-1}-r_{-2}] \\ \nonumber
&= w(\front_m) - 3 + 2 ( n_{-1} - r_{-1}) + 1& &[r_{-2}=0] \\ \nonumber
&= (n_0 - n_{-1} - n_{1}) - 3  + 2 ( n_{-1} - r_{-1}) + 1& &[w(\front_m)=n_0 - n_{-1} - n_{1}]\\ \nonumber
&= (n_0 - 2n_{-1}) + 2 ( n_{-1} - r_{-1}) -2& &[n_1=n_{-1}=m-1]\\ \nonumber
&= (4m-2 - 2n_{-1}) + 2 ( n_{-1} - r_{-1}) -2& &[n_0=4m-2]\\ \nonumber
&= 4m - 4 - 2 r_{-1}& & \nonumber 
\end{align*}

Since $n_{-1} = m-1$, $r_{-1}$ is in the set $\{0, \hdots, m-1\}$ and, thus, $h_0$ is in the set $\{4m-4, 4m-6, \hdots, 2m-2\}$. As a consequence of Theorem~\ref{t:duality}, the values $tb(\Leg_m)$ and $h_0$ determine $P_{\aug}(t)$. Every value in the set $\{4m-4, 4m-6, \hdots, 2m-2\}$ is achieved as $h_0$ for some augmentation in the statement of Theorem 2, as can be seen in Equation~\ref{eq:MCS-polys}. 
\end{remark}

\addcontentsline{toc}{section}{Bibliography} 
\bibliographystyle{amsplain}
\bibliography{Bibliography}

\providecommand{\bysame}{\leavevmode\hbox to3em{\hrulefill}\thinspace}
\providecommand{\MR}{\relax\ifhmode\unskip\space\fi MR }
\providecommand{\MRhref}[2]{%
  \href{http://www.ams.org/mathscinet-getitem?mr=#1}{#2}
}
\providecommand{\href}[2]{#2}
\begin{thebibliography}{10}

\bibitem{Chekanov2002a}
Y.~V. Chekanov, \emph{Differential algebra of {L}egendrian links}, Invent.
  Math. (2002), no.~150, 441--483.

\bibitem{Chekanov2002}
\bysame, \emph{Invariants of {L}egendrian knots}, Proceedings ICM (Beijing),
  vol.~II, Higher Ed. Press, 2002, pp.~385--394.

\bibitem{Chekanov2005}
Y.~V. Chekanov and P.~E. Pushkar, \emph{Combinatorics of fronts of {L}egendrian
  links and {A}rnol'd's 4-conjectures}, Uspekhi Mat. Nauk \textbf{60} (2005),
  99--154.

\bibitem{Eliashberg}
Ya. Eliashberg, \emph{Invariants in contact topology}, Proceedings ICM, vol.
  Extra Vol. II, 1998, pp.~327--338.

\bibitem{Eliashberg2000}
Ya. Eliashberg, A.~Givental, and H.~Hofer, \emph{Introduction to {S}ymplectic
  {F}ield {T}heory}, Geom. Funct. Analysis \textbf{10} (2000), no.~3, 560--673.

\bibitem{Etnyre2003}
J.~Etnyre and K.~Honda, \emph{On connected sums and {L}egendrian knots}, Adv.
  Math \textbf{179} (2003), no.~1, 59--74.

\bibitem{Fuchs2003}
D.~Fuchs, \emph{{C}hekanov-{E}liashberg invariant of {L}egendrian knots:
  existence of augmentations}, J. Geom. Phys. \textbf{47} (2003), no.~1,
  43--65.

\bibitem{Fuchs2004}
D.~Fuchs and T.~Ishkhanov, \emph{Invariants of {L}egendrian knots and
  decompositions of front diagrams}, Mosc. Math. J. \textbf{4} (2004), no.~3,
  707--717.

\bibitem{Henry2011}
M.~B. Henry, \emph{Connections between {F}loer-type invariants and {M}orse-type
  invariants of {L}egendrian knots}, Pacific J. Math. \textbf{249} (2011),
  no.~1, 77--133.

\bibitem{Henry2013}
M.~B. Henry and D.~Rutherford, \emph{A combinatorial {D}{G}{A} for {L}egendrian
  knots from generating families}, Comm. Cont. Math. \textbf{15} (2013), no.~2.

\bibitem{Henry2014}
\bysame, \emph{Ruling polynomials and augmentations into finite fields}, J. of
  Topology \textbf{to appear} (2014).

\bibitem{Jordan2006}
J.~Jordan and L.~Traynor, \emph{Generating family invariants for {L}egendrian
  links of unknots}, Alg. Geom. Topol. \textbf{6} (2006), 895--933.

\bibitem{Melvin2005}
P.~Melvin and S.~Shrestha, \emph{The nonuniqueness of {C}hekanov polynomials of
  {L}egendrian knots}, Geom. Topol. \textbf{9} (2005), 1221--1252.

\bibitem{Ng2001a}
L.~Ng, \emph{Maximal {T}hurston-{B}ennequin number of two-bridge links},
  Algebr. Geom. Topol. \textbf{1} (2001), 427--434.

\bibitem{Ng2003}
\bysame, \emph{Computable {L}egendrian invariants}, Topology \textbf{42}
  (2003), 55--82.

\bibitem{Ng2006}
L.~Ng and J.~Sabloff, \emph{The correspondence between augmentations and
  rulings for {L}egendrian knots}, Pacific J. Math. \textbf{224} (2006), no.~1,
  141--150.

\bibitem{Rutherford}
D.~Rutherford, \emph{The {T}hurston-{B}ennequin number, {K}auffman polynomial,
  and ruling invariants of a {L}egendrian link: {T}he {F}uchs conjecture and
  beyond}, Int. Math. Res. Not. \textbf{Article ID 78591} (2006).

\bibitem{Sabloff2005}
J.~Sabloff, \emph{Augmentations and rulings of {L}egendrian knots}, Int. Math.
  Res. Not. \textbf{19} (2005), 1157--1180.

\bibitem{Sabloff2006}
\bysame, \emph{Duality for {L}egendrian contact homology}, Geom. Topol.
  \textbf{10} (2006), 2351--2381.

\bibitem{Schubert}
H.~Schubert, \emph{{\"U}ber eine numerische {K}noteninvariante}, Math. Z.
  \textbf{61} (1954), 245--288.

\bibitem{Traynor2001}
L.~Traynor, \emph{Generating function polynomials for {L}egendrian links},
  Geom. Topol. \textbf{5} (2001), 719--760.

\end{thebibliography}

\end{document}